\newtheorem{theorem}{Theorem}[section]
\newtheorem{lemma}[theorem]{Lemma}
\theoremstyle{definition}
\numberwithin{equation}{section}
\begin{document}

\title[On the 2-head of the colored Jones polynomial for pretzel knots]{On the 2-head of the colored Jones polynomial for pretzel knots}

\author{Paul Beirne}

\address{School of Mathematics and Statistics, University College Dublin, Belfield, Dublin 4, Ireland}

\email{paul.beirne@ucdconnect.ie}

\subjclass[2010]{57M25, 57M27}
\keywords{Colored Jones polynomial, pretzel knots, higher order stability.}

\date{\today}

\begin{abstract}
   In this paper, we prove a formula for the 2-head of the colored Jones polynomial for an infinite family of pretzel knots. Following Hall, the proof utilizes skein-theoretic techniques and a careful examination of higher order stability properties for coefficients of the colored Jones polynomial.
\end{abstract}

\maketitle

\section{Introduction}

The colored Jones polynomial $J_{N,K}(q)$ of a knot $K$ is an important quantum knot invariant which, conjecturally, contains information about the geometry of $K$ \cite{Murakami}. Here, $N \in \mathbb{N}$ is the number of strands colored by the $N$-th Jones-Wenzl idempotent of the knot diagram of $K$.  Following Armond and Dasbach \cite{armond_dasbach_2011}, the tail $T_K(q)$ of the sequence $\{J_{N,K}(q)\}_{N \in \mathbb{N}}$ is a power series in $q$ such that its lowest $N$ coefficients match the lowest $N$ coefficients of $J_{N,K}(q)$ for all $N \geq 1$. Since its inception, there has been considerable interest in proving the existence (and non-existence) of the tail for various families of knots  \cite{armond_2013,armond_dasbach_2011,a-d,e-h,elhamdadi_hajij_saito_2017,gl_2016, hajij_2017,c-v,  lee_van_der_Veen_2018} and its connection to the volume conjecture \cite{dasbach_lin_2006}, quantum spin networks \cite{hajij} and Rogers-Ramanujan type identities \cite{BEIRNE2017247,keilthy_osburn_2016}.

Similarly, the head $H_K(q)$ of the sequence $\{J_{N,K}(q)\}_{N \in \mathbb{N}}$ is the power series in $q$ formed by considering the highest $N$ coefficients of $J_{N,K}(q)$ for all $N \geq 1$. Note that the colored Jones polynomial of a knot $K$ is related to the colored Jones polynomial of $-K$, the mirror of $K$, via
\begin{equation}
    J_{N,K}(q) = J_{N,-K}(q^{-1}) \nonumber
\end{equation}
\noindent and so, in particular, $H_K(q)=T_{-K}(q)$. The objective of this paper is to examine the higher order stability for the coefficients of the colored Jones polynomial for an infinite family of pretzel knots which we now describe.

 A negative twist region is a region of the knot with a positive number of negative half twists (see Figure \ref{fig:fig1}).
 
 \begin{figure}[H]
\begin{subfigure}{0.3\textwidth}
\begin{center}
\includegraphics[scale = 0.3]{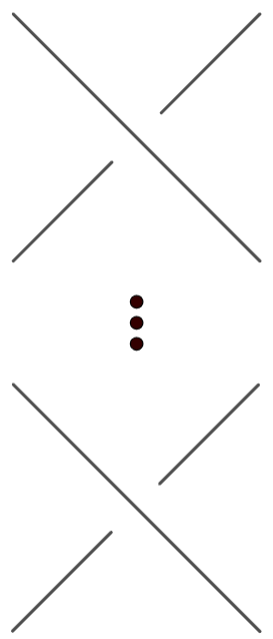}
\caption{Negative twist region}
\end{center}
\end{subfigure}
\begin{subfigure}{0.3\textwidth}
\begin{center}
\includegraphics[scale = 0.3]{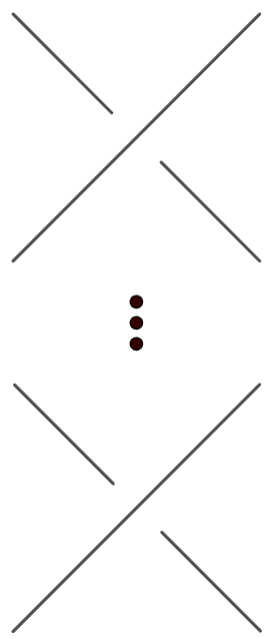}
\caption{Positive twist region}
\end{center}
\end{subfigure}
\caption{Twist regions}
\label{fig:fig1}
\end{figure}
 
 Consider the family of pretzel knots obtained by connecting three negative twist regions with strands as in Figure \ref{fig:fig2}.
 
 \begin{figure}[H]
\begin{center}
\includegraphics[scale = 0.3]{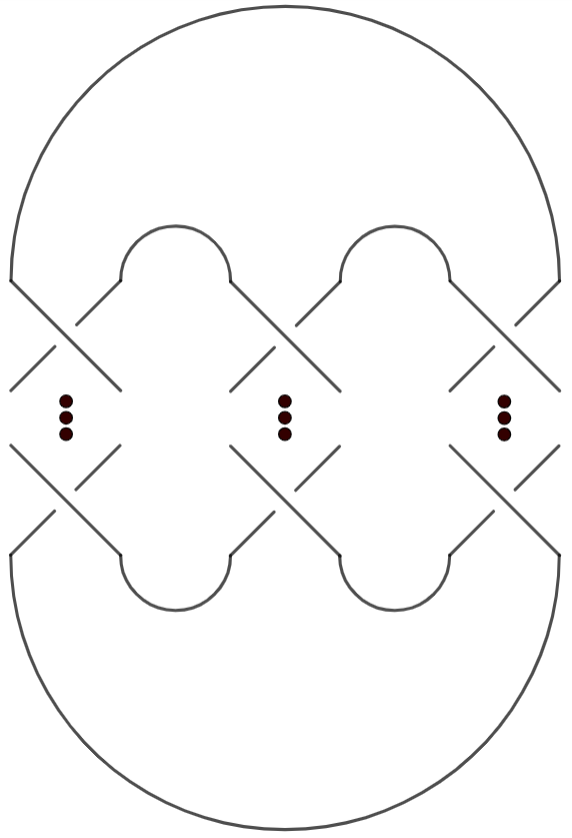} 
\end{center}
\caption{Pretzel knot with three negative twist regions}
\label{fig:fig2}
\end{figure}
 
 The Tait graph of a knot is a planar graph found by labelling each region in the knot diagram either as an $A$-region or as a $B$-region, according to the rule in Figure \ref{fig:fig3}.
 
 \begin{figure}[H]
\begin{center}
\includegraphics[scale = 0.3]{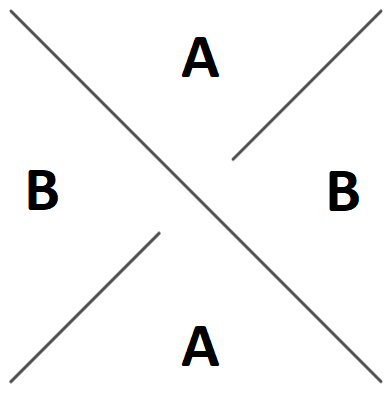} 
\end{center}
\caption{$A$ and $B$ regions}
\label{fig:fig3}
\end{figure}
 
 The knots in Figure \ref{fig:fig2} are alternating and thus such a labelling is well-defined. The $B$-checkerboard graph (or ``Tait graph") is formed by considering the $B$-regions of the knot diagram as vertices and joining two vertices by an edge for each crossing that is simultaneously adjacent to both of the corresponding regions. To construct the reduced Tait graph, replace every set of multiple edges connecting two vertices with a single edge. For example, the reduced Tait graph for the $4_1$ knot is given in Figure \ref{fig:fig4}.
 
\begin{figure}[H]
\begin{subfigure}[b]{0.3\textwidth}
\begin{center}
\includegraphics[scale = 0.4, trim = 0 2cm 0 0]{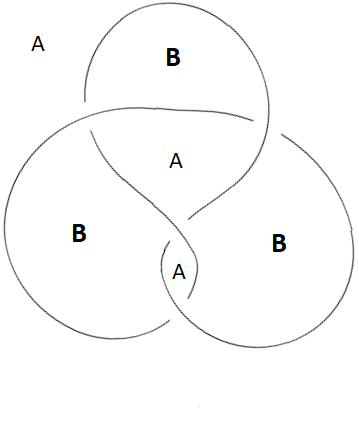}
\end{center}
\caption{Checkerboard coloring of $4_1$}
\end{subfigure}
\begin{subfigure}[b]{0.3\textwidth}
\begin{center}
\includegraphics[scale = .22]{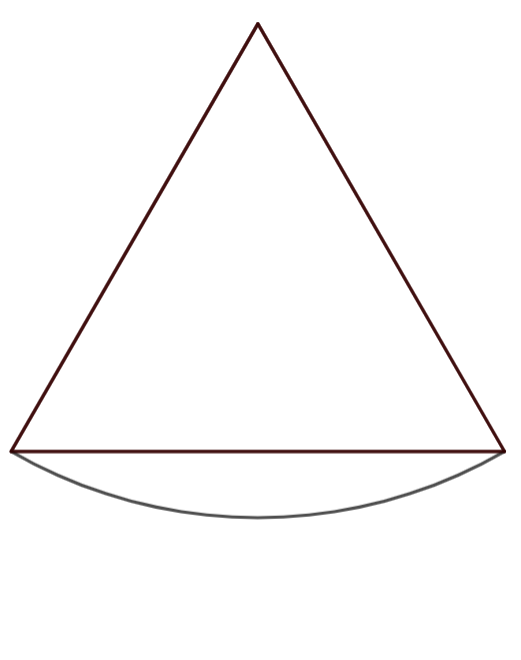}
\end{center}
\caption{Tait graph of $4_1$}
\end{subfigure}
\begin{subfigure}[b]{0.3\textwidth}
\begin{center}
    \includegraphics[scale=0.22, trim = 0 -2.5cm 0 0]{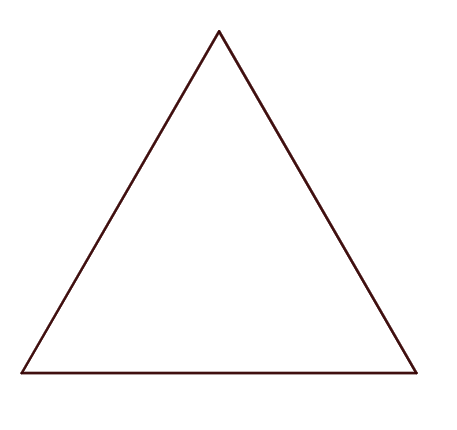}
\end{center}
\caption{Reduced Tait graph of $4_1$}
\end{subfigure}
\caption{$4_1$ knot}
\label{fig:fig4}
\end{figure}

\noindent Note that any knot in Figure 2 will have a triangle as its reduced Tait graph.

First order stability for the coefficients of the colored Jones polynomial of pretzel knots with three negative twist regions was first studied by Hall in \cite{hall_2018}. In \cite{elhamdadi_hajij_saito_2017}, Elhamdadi, Hajij and Saito proved that second order stability for this family of knots is ensured. To illustrate these stabilities, consider the highest $3N+1$ coefficients of the $N$ colored Jones polynomial for the knot $-9_{35}$.

\begin{center}
\begin{table}[h]
\centering
\begin{tabular}{|c|c c c c c c c c c c c c c c c c|}
\hline
    $H_{-9_{35}}(q)$ & $1$ & $-1$ & $-1$ & $0$ & $0$ & $1$ & $0$ & $1$ & $0$ & $0$ & $0$ & $0$ & $-1$ & $0$ & $0$ & $-1$ \\
    \hline
     $N=2$ & $1$ & $-1$ & $3$ & $-4$ & $3$ & $-5$ & $4$ & $-3$ & $2$ & $-1$ & $0$ & $0$ & $0$ & $0$ & $0$ & $0$ \\
     $N=3$ & $1$ & $-1$ & $-1$ & $4$ & $-1$ & $-6$ & $7$ & $0$ & $-11$ & $8$ & $4$ & $-13$ & $7$ & $9$ & $-13$ & $4$ \\
     $N=4$ & $1$ & $-1$ & $-1$ & $0$ & $4$ & $0$ & $-4$ & $-5$ & $7$ & $6$ & $-4$ & $-13$ & $4$ & $10$ & $3$ & $-14$ \\
     $N=5$ & $1$ & $-1$ & $-1$ & $0$ & $0$ & $5$ & $-1$ & $-3$ & $-3$ & $-6$ & $11$ & $5$ & $2$ & $-6$ & $-20$ & $8$ \\
     $N=6$ & $1$ & $-1$ & $-1$ & $0$ & $0$ & $1$ & $4$ & $0$ & $-4$ & $-3$ & $-3$ & $-2$ & $9$ & $9$ & $2$ & $-4$ \\
     $N=7$ & $1$ & $-1$ & $-1$ & $0$ & $0$ & $1$ & $0$ & $5$ & $-1$ & $-4$ & $-3$ & $-3$ & $0$ & $-3$ & $14$ & $7$ \\
     \hline
\end{tabular}
\caption{Highest coefficients of $J_{N,-9_{35}}(q)$}
\end{table}
\end{center}

In Table 1, we see that the highest $N$ coefficients of each polynomial stabilise to the highest coefficients of 
\begin{equation}
\prod^\infty_{i=1}(1-q^{-i}).
\end{equation}

Subtracting the head from each of the polynomials in Table 1 gives us the following in Table 2.
\begin{table}[h]
\begin{center}
\begin{tabular}{|c|c c c c c c c c c c c c c c c c|}
\hline
    $H_{-9_{35}}(q)$ & $1$ & $-1$ & $-1$ & $0$ & $0$ & $1$ & $0$ & $1$ & $0$ & $0$ & $0$ & $0$ & $-1$ & $0$ & $0$ & $-1$ \\
    \hline
     $N=2$ & $0$ & $0$ & $4$ & $-4$ & $3$ & $-6$ & $4$ & $-4$ & $2$ & $-1$ & $0$ & $0$ & $1$ & $0$ & $0$ & $1$ \\
     $N=3$ & $0$ & $0$ & $0$ & $4$ & $-1$ & $-7$ & $7$ & $-1$ & $-11$ & $8$ & $4$ & $-13$ & $8$ & $9$ & $-13$ & $5$ \\
     $N=4$ & $0$ & $0$ & $0$ & $0$ & $4$ & $-1$ & $-4$ & $-6$ & $7$ & $6$ & $-4$ & $-13$ & $5$ & $10$ & $3$ & $-13$ \\
     $N=5$ & $0$ & $0$ & $0$ & $0$ & $0$ & $4$ & $-1$ & $-4$ & $-3$ & $-6$ & $11$ & $5$ & $3$ & $-6$ & $-20$ & $9$ \\
     $N=6$ & $0$ & $0$ & $0$ & $0$ & $0$ & $0$ & $4$ & $-1$ & $-4$ & $-3$ & $-3$ & $-2$ & $10$ & $9$ & $2$ & $-3$ \\
     $N=7$ & $0$ & $0$ & $0$ & $0$ & $0$ & $0$ & $0$ & $4$ & $-1$ & $-4$ & $-3$ & $-3$ & $1$ & $-3$ & $14$ & $8$ \\
     \hline
\end{tabular}
\end{center}
\caption{Coefficients of $J_{N,-9_{35}}(q)$ after subtracting $H_{-9_{35}}(q)$}
\end{table}

\pagebreak

Left-justifying the coefficients in Table 2 so that each row begins with a non-zero term gives in Table 3 a stabilised sequence called the ``1-head", denoted $H_{1,-9_{35}}(q)$.

\begin{table}[h]
\begin{center}
\begin{tabular}{|c|c c c c c c c c c c c c|}
\hline
    $H_{1,-9_{35}}(q)$ & $4$ & $-1$ & $-4$ & $-3$ & $-3$ & $1$ & $0$ & $4$ & $3$ & $3$ & $3$ & $3$ \\
    \hline
     $N=2$ & $4$ & $-4$ & $3$ & $-6$ & $4$ & $-4$ & $2$ & $-1$ & $0$ & $0$ & $1$ & $0$ \\
     $N=3$ & $4$ & $-1$ & $-7$ & $7$ & $-1$ & $-11$ & $8$ & $4$ & $-13$ & $8$ & $9$ & $-13$ \\
     $N=4$ & $4$ & $-1$ & $-4$ & $-6$ & $7$ & $6$ & $-4$ & $-13$ & $5$ & $10$ & $3$ & $-13$ \\
     $N=5$ & $4$ & $-1$ & $-4$ & $-3$ & $-6$ & $11$ & $5$ & $3$ & $-6$ & $-20$ & $9$ & $6$ \\
     $N=6$ & $4$ & $-1$ & $-4$ & $-3$ & $-3$ & $-2$ & $10$ & $9$ & $2$ & $-3$ & $-12$ & $-16$ \\
     $N=7$ & $4$ & $-1$ & $-4$ & $-3$ & $-3$ & $1$ & $-3$ & $14$ & $8$ & $2$ & $-3$ & $-9$ \\
     \hline
\end{tabular}
\end{center}
\caption{Stabilising coefficients of $H_{1,-9_{35}}(q)$}
\end{table}

We observe that the highest $N-1$ coefficients of each polynomial in this new sequence stabilise to 

\begin{equation}
H_{1,-9_{35}}(q) = \prod^\infty_{i=1}(1-q^{-i})\left(1+\frac{3}{1-q^{-1}}\right),
\end{equation}

\noindent which was proven by Hall \cite{hall_2018}. Repeating this process of subtracting $H_{1,-9_{35}}(q)$ and left-justifying the coefficients, we notice that this stabilisation continues, thus forming the ``2-head" $H_{2,-9_{35}}(q)$ of $-9_{35}$ as in Table 4.

\begin{table}[h]
\begin{center}
\begin{tabular}{|c|c c c c c c|}
\hline
    $H_{2,-9_{35}}(q)$ & $-3$ & $10$ & $5$ & $-1$ & $-6$ & $-12$ \\
    \hline
     $N=2$ & $-3$ & $7$ & $-3$ & $7$ & $-5$ & $2$ \\
     $N=3$ & $-3$ & $10$ & $2$ & $-12$ & $8$ & $0$ \\
     $N=4$ & $-3$ & $10$ & $5$ & $-4$ & $-17$ & $2$ \\
     $N=5$ & $-3$ & $10$ & $5$ & $-1$ & $-9$ & $-23$ \\
     $N=6$ & $-3$ & $10$ & $5$ & $-1$ & $-6$ & $-15$ \\
     $N=7$ & $-3$ & $10$ & $5$ & $-1$ & $-6$ & $-12$ \\
     \hline
\end{tabular}
\end{center}
\caption{Stabilising coefficients of $H_{2,-9_{35}}(q)$}
\end{table}

Here, we observe that the highest $N-1$ coefficients of each polynomial in this new sequence stabilise to 

\begin{equation}\label{eqn:2head935}
H_{2,-9_{35}}(q) = \prod^\infty_{i=1}(1-q^{-i})\left(\frac{-3+10q^{-1}+5q^{-2}-4q^{-3}+q^{-4}}{(1-q^{-1})(1-q^{-2})}\right).
\end{equation}

Equation (\ref{eqn:2head935}) is one instance of our general result which we now state.

\begin{theorem}
For any pretzel knot $K$ with three negative twist regions where $n$ is the number of twist regions with exactly two half-twists and $m$ is the number of twist regions with at least three half-twists, we have

\begin{equation}
H_{2,K}(q) = \prod^\infty_{i=1}(1-q^{-i})\left(q^{-1}+n\cdot\frac{1}{(1-q^{-1})}-\frac{f_{n,m}(q)}{(1-q^{-1})(1-q^{-2})}\right)
\end{equation}

\noindent where

$$f_{n,m}(q) = \begin{cases} 
      
     0 & \text{if} \  n+m = 0, \\ 1-3q^{-1}-q^{-2}+2q^{-3} & \text{if} \  n+m=1, \\
     2-6q^{-1}-3q^{-2}+3q^{-3} & \text{if} \  n+m=2, \\
     3-9q^{-1}-6q^{-2}+3q^{-3} & \text{if} \  n+m=3. \\ 
   \end{cases}$$

\end{theorem}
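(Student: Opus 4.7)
The plan is to follow Hall's skein-theoretic approach and push the asymptotic expansion one order further than was done for the 1-head. First I would represent $J_{N,K}(q)$ via the Kauffman bracket skein module, decorating each strand of the pretzel diagram with the $N$-th Jones--Wenzl idempotent. Then, in each of the three negative twist regions, I would apply the standard expansion that writes a power of twists between two Jones--Wenzl idempotents as a sum over cabling indices $0 \le k_i \le N$, weighted by a known $q$-power coefficient (depending on the number of half-twists and on $k_i$). After applying this to all three twist regions, the diagram reduces to a theta-graph skein element whose evaluation is the explicit theta-coefficient $\theta(N,N,2k)$; thus $J_{N,K}(q)$ becomes a triple sum over $(k_1,k_2,k_3)$ of twist weights times a theta quotient.

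Next, I would convert this triple sum into a form adapted to extracting the highest coefficients by substituting $j_i = N-k_i$ and expanding every factor as a power series in $q^{-1}$. The key reduction, already used by Hall and by Elhamdadi--Hajij--Saito, is that the twist weights are supported on small $j_i$ up to an error of order $q^{-C(N)}$, so for any fixed number of top coefficients only finitely many $(j_1,j_2,j_3)$ contribute. Hall's theorem for the 1-head and the Elhamdadi--Hajij--Saito theorem for second-order stability may be assumed, which means that $H_K(q)$ and $H_{1,K}(q)$ have already been identified and subtracted off; the task reduces to computing the next stable band. Concretely, I would expand each twist weight to one more order in $q^{-1}$ than the 1-head proof required, expand the theta-coefficient using the standard identity
\begin{equation}
\theta(N,N,2k) \;=\; (-1)^k \, \frac{[N+k+1]!\,[N-k]!}{[2k]!\,[N]!^2}\,\frac{[2k]!\,[2k]!}{[2k]!} \cdot (\text{correction})
\end{equation}
(in whatever normalization is in force) as a formal series in $q^{-1}$, and then collect the $(j_1,j_2,j_3)$ terms that survive after the head and 1-head have been removed.

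At that point the contribution organises as a sum of three pieces: a constant rational-function piece independent of the twist data, a piece $n/(1-q^{-1})$ coming from each twist region with exactly two half-twists (these regions produce an extra boundary term in the twist-weight expansion that the $m$-regions do not), and a $1/(1-q^{-1})(1-q^{-2})$ piece collecting the second-order remainder. The polynomial $f_{n,m}(q)$ is then read off by combining the contributions from the three regions; by the trivalent structure of the reduced Tait graph, only the values $n+m = 0,1,2,3$ arise, giving the four cases in the theorem.

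The hard part will be the careful bookkeeping in the second-order expansion: the twist weights for a region with exactly two half-twists have a different shape (a genuine finite sum rather than a long twist formula) than those with $\ge 3$ half-twists, and getting the sign and coefficient of $f_{n,m}$ correct requires tracking how these two kinds of regions interact inside the triple sum. I would verify the formula case by case in $n+m$, cross-checking against the explicit computation of $H_{2,-9_{35}}(q)$ in~\eqref{eqn:2head935} (the case $n=0$, $m=3$, which forces the top case of $f_{n,m}$) and against small-crossing examples on the other side ($n=3$, $m=0$ and the mixed cases) to catch any sign errors before fixing the general statement.
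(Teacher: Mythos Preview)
Your overall strategy---Hall's skein-theoretic expansion of $J_{N+1,K}(q)$ as a triple sum over $(j_1,j_2,j_3)$, followed by identifying which summands survive in the top $3N+1$ coefficients and subtracting the known head and $1$-head---is exactly the route the paper takes. So the framework is right.

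There is, however, a real gap in your case analysis. You organise the ``hard part'' around the distinction between regions with exactly two half-twists and regions with $\ge 3$ half-twists, and you attribute the shape of $f_{n,m}$ to that dichotomy. That is not where $f_{n,m}$ comes from. In the paper's formula (and in the actual computation), the summand with all $j_i=N$ already gives the full answer in the $(3^+,3^+,3^+)$ case; a region with $m_i=2$ only contributes the single extra summand $\overline{S_{N,N,N-1}}$, which sits at $q$-degree shift $2N+1$ and accounts precisely for the $n/(1-q^{-1})$ term. The polynomial $f_{n,m}$, by contrast, depends on $n+m$, equivalently on $3-(n+m)$, the number of regions with \emph{exactly one} half-twist. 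For each such region the $q$-degree drop when $j_i$ decreases by $1$ is only about $N$, so summands with $j_i=N-1,\,N-2,\,N-3$ all land in the top $3N+1$ window and must each be evaluated (the paper does this as Lemmas 3.1--3.3). Moreover, once two regions have a single half-twist, the mixed summand $\overline{S_{N,N-1,N-1}}$ also contributes (Lemma 3.4), and with three such regions you pick up three symmetric copies of it. None of these terms appear in your plan.

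Two smaller points: the twist expansion does not have a ``different shape'' for $m_i=2$ versus $m_i\ge 3$---it is the same formula, only the degree bookkeeping changes---and your displayed theta identity with a trailing ``(correction)'' is not usable as written; in practice the paper avoids any delicate theta expansion by reducing each needed $\overline{S}$-term directly via the quantum-factorial identities (Lemmas 2.3 and 2.4) for $\{2N\}!$ and $\{3N+1\}!$ to order $3N+1$.
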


This paper is organised as follows. In Section 2, we provide definitions, notation and some preliminary results necessary to prove the main result. In Section 3, we prove Theorem 1.1 by calculating an expression for the first $3N+1$ coefficients of $J_{N+1,K}(q)$ in the case where $K$ is a knot with three negative twist regions, each with at least three half-twists. We then consider cases defined by combinations of the number of crossings in each negative twist region with the previous knot as the base case.

\section{Preliminaries}

We first recall a formula for $J_{N+1,K}(q)$ as given in \cite{hall_2018}. For further details, see also \cite{masbaum_2003,masbaum_vogel_1994}. Fusion is given by

\begin{center}
\includegraphics[scale = 0.3, trim = 0 1.5cm -1cm 0]{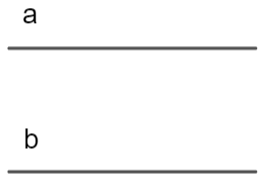} $= \sum_c\frac{\Delta_c}{\theta(a,b,c)}$\includegraphics[scale = 0.3, trim = -1cm 1.5cm 0 0]{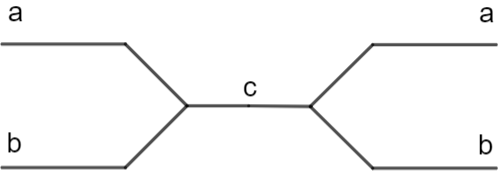}
\end{center}

\noindent where 

$$\Delta_n=\includegraphics[scale = 0.3, trim = -1cm 1.5cm -1cm 0]{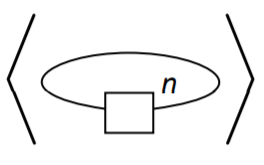} = (-1)^n[n+1],$$

\begin{equation}
[n] = \frac{\{n\}}{\{1\}}, \ \{n\}=A^{2n}-A^{-2n} \text{ and } A^{-4}=a^{-2}=q.
\label{sqbracandcurlybracdefn}
\end{equation}

\noindent Here, $[n]!$ and $\{n\}!$ are naturally defined as

\begin{equation}
[n]! = [n][n-1]\cdots[2][1], \qquad \{n\}! = \{n\}\{n-1\}\cdots\{2\}\{1\}.
\label{eq:nfactdef}
\end{equation}

\noindent Additionally, we set

\begin{equation}
    \Delta_n!=\Delta_n\Delta_{n-1}\cdots\Delta_1.
    \nonumber
\end{equation}

\noindent We also make use of the trihedron coefficient $\theta(a,b,c)$ defined by

$$\theta(a,b,c)= \includegraphics[scale = 0.3, trim = -1cm 1.5cm -1cm 0]{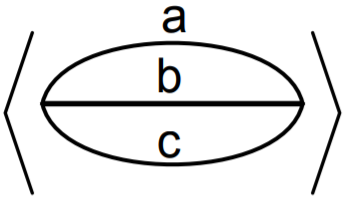} = (-1)^{i+j+k}\frac{[i+j+k+1]![i]![j]![k]!}{[i+j]![j+k]![i+k]!}$$

\noindent where

$$i=\frac{b+c-a}{2}, \qquad j=\frac{a+c-b}{2}, \qquad k=\frac{a+b-c}{2}.$$

\noindent Also, half-twists can be removed using

$$\includegraphics[scale = 0.3, trim = -1cm 1.5cm -1cm 0]{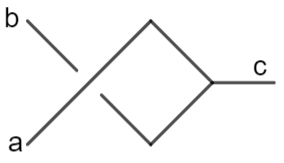} = \gamma(a,b,c)\includegraphics[scale = 0.3, trim = -1cm 1.5cm -1cm 0]{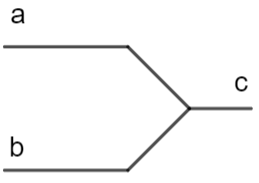}$$

\noindent where

$$\gamma(a,b,c)=(-1)^{\frac{a+b-c}{2}}A^{a+b-c+\frac{a^2+b^2-c^2}{2}}$$

\noindent is the negative half-twist coefficient.

A formula for the $N+1$ colored Jones polynomial for pretzel knots with three negative twist regions is now given by (see (4.1) in \cite{hall_2018})\footnote{We follow the convention in \cite{hall_2018} for normalisation where $J_{N,K}(q)$ is the colored Jones polynomial of $K$ with each component colored by
the $N$-dimensional irreducible representation of $\mathfrak{sl}_2$.}

\begin{equation}\label{eq:cjpfull}
J_{N+1,K}(q) = \sum_{j_1, j_2, j_3=0}^{N} S_{j_1, j_2, j_3} = \sum_{j_1, j_2, j_3=0}^{N} \prod_{i=1}^{3} \gamma(N,N,2j_i)^{m_i}\frac{\Delta_{2j_i}}{\theta(N,N,2j_i)}\Gamma_{N,(j_1,j_2,j_3)}
\end{equation}

\noindent where $m_i$ is the number of crossings in the $i$th twist region and $\Gamma_{N,(j_1,j_2,j_3)}$ is the multisum of trivalent graphs resulting from the fusion of strands and the removal of half-twists from the knot diagram (see (4.2) in \cite{hall_2018}).

Throughout this paper, we will only be concerned with a certain number of the highest coefficients of the various terms in an expression. To facilitate such calculations, we write $f(q) \overset{\cdotp n}{=} g(q)$ if the coefficients of the $n$ terms of highest degree in $f(q)$ agree with the coefficients of the $n$ terms of highest degree in $g(q)$, up to a common sign change. We write $f(q) \overset{\cdotp \infty}{=} g(q)$ if $f(q) = \pm q^s g(q)$ for $s \in \mathbb{Z}$.
Our first result is a routine extension of equation (4.15) in \cite{hall_2018}.

\begin{lemma}

For $K$ a pretzel knot with three negative twist regions, each with at least three twists, we have

\begin{equation}\label{eq:cjp}
J_{N+1,K}(q) \overset{\cdotp3N+1}{=} \frac{(-1)^N\{3N+1\}!\{N\}!^3}{\{2N\}!^3\{1\}}.
\end{equation}

\end{lemma}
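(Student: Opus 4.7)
The plan is to follow the degree-tracking strategy behind equation (4.15) of \cite{hall_2018} applied to the fusion formula \eqref{eq:cjpfull}. The proof consists of three steps: locate a dominant summand in the triple sum, bound the top $A$-degree of every other summand, and evaluate the dominant one in closed form.

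First I will determine the top $A$-degree of each summand $S_{j_1,j_2,j_3}$ as an explicit function of $(j_1,j_2,j_3)$, $N$ and $(m_1,m_2,m_3)$. By the explicit formula for the negative half-twist coefficient given above, each factor $\gamma(N,N,2j_i)^{m_i}$ is an $A$-monomial of degree $m_i(N^2+2N-2j_i-2j_i^2)$, and thus contributes a quadratic-in-$j_i$ penalty proportional to $m_i$ for every deviation of $j_i$ from its extremal value. The remaining ingredients $\Delta_{2j_i}/\theta(N,N,2j_i)$ and $\Gamma_{N,(j_1,j_2,j_3)}$ contribute only terms linear in each $j_i$, a fact that can be verified via the trihedron formula together with the Hall-type estimates on $\Gamma$ proven in \cite{hall_2018}. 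Since $m_i \geq 3$, the quadratic penalty strictly dominates the linear recovery, and a routine bookkeeping argument produces an $A$-degree gap of at least $12N+4$ between the dominant summand and any other. Because the top $3N+1$ coefficients in $q$ occupy a window of only $12N$ in $A$-degree, no non-dominant summand contributes to them.

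Second, I will evaluate the dominant summand explicitly. At the extremal fusion indices the ratios $\Delta_{2j_i}/\theta(N,N,2j_i)$ collapse to elementary expressions, while $\Gamma_N$ reduces to the value of a theta-type network expressible as a quotient of $\{\,\cdot\,\}$-factorials. Substituting the closed forms for $\Delta$, $\theta$ and $\gamma$, collecting powers of $A$, and simplifying using \eqref{eq:nfactdef} matches this contribution against $(-1)^N\{3N+1\}!\{N\}!^3/(\{2N\}!^3\{1\})$ up to the common sign and $q$-shift permitted by the $\overset{\cdotp3N+1}{=}$ equivalence.

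The main obstacle is the degree bookkeeping of the first step: although each individual estimate is elementary, one must combine the three quadratic contributions from the $\gamma^{m_i}$ factors with the linear contributions coming from $\Delta/\theta$ and from $\Gamma_N$, and verify an $A$-degree gap exceeding $12N$. The hypothesis $m_i \geq 3$ is precisely what guarantees this gap; when some $m_i\in\{1,2\}$ the dominant-summand analysis must be modified, which is handled by the case distinction in Section 3.
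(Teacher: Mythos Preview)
Your proposal is correct and follows precisely the strategy the paper has in mind: the paper does not give its own proof of this lemma but states it as ``a routine extension of equation (4.15) in \cite{hall_2018}'', and your three-step outline (isolate $S_{N,N,N}$ as the dominant summand, use the quadratic $\gamma$-degree combined with Hall's linear estimates on $\Delta/\theta$ and $\Gamma$ to show every other summand drops by at least $3N+1$ in $q$-degree when each $m_i\geq 3$, then evaluate $S_{N,N,N}$ in closed form) is exactly that extension. Your $A$-degree bookkeeping ($12N+4$ gap versus a $12N$ window) is the correct translation of the $q$-degree statement, and the case analysis you mention for $m_i\in\{1,2\}$ is indeed what the paper carries out in Sections~3.2--3.5.
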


We will also recall (4.22) in \cite{hall_2018}.

\begin{lemma}
We have 

\begin{equation}
    (-1)^N\{N\}! \overset{\cdotp \infty}{=}\prod^N_{i=1}(1-q^{-i}).
    \label{eqn:Nfac}
\end{equation}
\end{lemma}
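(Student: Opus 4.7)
The plan is to unwind the definitions in (2.1) and perform a direct factorisation, since $\overset{\cdotp \infty}{=}$ only requires equality up to a monomial prefactor $\pm q^{s}$. Starting from $\{i\} = A^{2i} - A^{-2i}$, I would pull out the negative power to write
\begin{equation}
\{i\} = -A^{-2i}\bigl(1 - A^{4i}\bigr). \nonumber
\end{equation}
Then I would apply the substitution $A^{-4} = q$ from (2.1), which gives $A^{4i} = q^{-i}$, so that $\{i\} = -A^{-2i}(1 - q^{-i})$.

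Next I would take the product over $i = 1, \dots, N$ as dictated by the definition of $\{N\}!$ in (2.2):
\begin{equation}
\{N\}! \;=\; \prod_{i=1}^{N}\bigl(-A^{-2i}\bigr)\bigl(1 - q^{-i}\bigr) \;=\; (-1)^{N}\,A^{-N(N+1)}\,\prod_{i=1}^{N}(1 - q^{-i}). \nonumber
\end{equation}
Multiplying both sides by $(-1)^{N}$ and again using $A^{-4} = q$ to rewrite $A^{-N(N+1)}$ as a monomial in $q$, I obtain
\begin{equation}
(-1)^{N}\{N\}! \;=\; q^{\,N(N+1)/4}\,\prod_{i=1}^{N}(1 - q^{-i}). \nonumber
\end{equation}

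Since this differs from $\prod_{i=1}^{N}(1-q^{-i})$ only by the monomial factor $q^{N(N+1)/4}$, the conclusion follows immediately from the definition of $\overset{\cdotp \infty}{=}$ recorded just before the lemma.

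There is essentially no obstacle here: the proof is a bookkeeping exercise in the skein-theoretic conventions $A^{-4} = q$ and $\{n\} = A^{2n} - A^{-2n}$. The only mild care required is to verify that the overall sign $(-1)^{N}$ and the overall $A$-power exactly collapse, so that the remaining discrepancy is a single monomial in $q$ absorbed by $\overset{\cdotp \infty}{=}$; both follow from summing $\sum_{i=1}^{N} 2i = N(N+1)$ in the exponent of $A$.
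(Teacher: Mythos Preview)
Your argument is correct, and in fact the paper does not give its own proof of this lemma: it simply recalls the identity as equation (4.22) of Hall \cite{hall_2018}. Your direct computation---factoring $\{i\}=-A^{-2i}(1-A^{4i})$, substituting $A^{4i}=q^{-i}$, and collecting the sign and monomial prefactor across the product---is the standard derivation and is essentially what lies behind Hall's statement as well.

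One very minor technical remark: the exponent you obtain, $N(N+1)/4$, is in general only a half-integer (e.g.\ $N=1$ gives $1/2$), whereas the paper's definition of $\overset{\cdotp\infty}{=}$ nominally requires $s\in\mathbb{Z}$. This is not a flaw in your reasoning but an imprecision in the paper's stated convention; the paper itself freely uses half-integer powers of $q$ elsewhere (for instance $q^{(N+1)/2}$ in the proof of Lemma~2.3), so in practice $s$ is allowed to range over $\tfrac{1}{2}\mathbb{Z}$ and your conclusion stands.
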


In order to prove Theorem 1.1, we will also need the following lemmas which will allow us to work with the first $3N+1$ coefficients of various expressions.

\begin{lemma}

We have

\begin{equation}\label{2N} 
 \{2N\}! \overset{\cdotp3N+1}{=} (-1)^{N}\{N\}!\left(1-\frac{q^{-N-1}-q^{-2N-1}}{1-q^{-1}}+\frac{q^{-2N-2}}{2(1-q^{-1})^2}-\frac{q^{-2N-2}}{2(1-q^{-2})}\right)
\end{equation}
\noindent and
\begin{equation}\label{2Nsq}
\{2N\}!^2 \overset{\cdotp3N+1}{=} \{N\}!^2 \left(1-2\cdot\frac{q^{-N-1}-q^{-2N-1}}{1-q^{-1}}+\frac{2q^{-2N-2}}{(1-q^{-1})^2}-\frac{q^{-2N-2}}{1-q^{-2}}\right).
\end{equation}
\label{lem:2nfac}
\end{lemma}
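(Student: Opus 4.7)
The plan is to derive (2.8) by reducing $\{2N\}!/((-1)^N\{N\}!)$ to the product $P_N(q) := \prod_{k=N+1}^{2N}(1-q^{-k})$ and truncating to the top $3N+1$ coefficients, then obtain (2.9) by squaring (2.8). Using $\{k\} = -A^{-2k}(1-q^{-k})$ together with Lemma 2.2, one has $\{2N\}!/((-1)^N\{N\}!) = \pm A^{s}\,P_N(q)$ for an integer $s$. Since $\overset{\cdotp 3N+1}{=}$ ignores overall signs and common powers of $A$, it suffices to analyse the top $3N+1$ coefficients of $P_N(q)$.

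I would expand $P_N = \sum_{r\ge 0}(-1)^r e_r$, where $e_r$ denotes the $r$-th elementary symmetric polynomial in the variables $q^{-N-1},\ldots,q^{-2N}$. The leading monomial of $e_r$ is $q^{-rN-r(r+1)/2}$, so $e_r$ is supported in degrees strictly below $-3N$ whenever $r\ge 3$; thus only $e_0,e_1,e_2$ contribute to the top $3N+1$ coefficients. A geometric sum gives $e_1 = (q^{-N-1}-q^{-2N-1})/(1-q^{-1})$, reproducing the second term of (2.8). For $e_2$, an elementary count of pairs $N+1\le k_1 < k_2\le 2N$ with $k_1+k_2 = 2N+m$ yields the coefficient $\lfloor (m-1)/2\rfloor$ on $q^{-2N-m}$ for $3\le m\le N$. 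To match this against the claimed formula, one verifies the partial-fraction identity $\frac{q^{-2N-2}}{2(1-q^{-1})^2} - \frac{q^{-2N-2}}{2(1-q^{-2})} = \frac{q^{-2N-3}}{(1-q^{-1})(1-q^{-2})}$, expands the right-hand side as $\sum_{n\ge 0}(\lfloor n/2\rfloor+1)\,q^{-n-2N-3}$, and checks that the substitution $n = m-3$ recovers exactly $\lfloor (m-1)/2\rfloor$. This completes (2.8).

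For (2.9), I would square (2.8). Writing the bracketed factor as $1 + \alpha + \beta$, where $\alpha = -e_1$ has top degree $-N-1$ and $\beta = \frac{q^{-2N-2}}{2(1-q^{-1})^2} - \frac{q^{-2N-2}}{2(1-q^{-2})}$ has top degree $-2N-3$, the cross terms $2\alpha\beta$ and $\beta^2$ have top degrees $-3N-4$ and $-4N-6$ respectively, both strictly below $-3N$, so they vanish under $\overset{\cdotp 3N+1}{=}$. Moreover, $\alpha^2 = \frac{q^{-2N-2}-2q^{-3N-2}+q^{-4N-2}}{(1-q^{-1})^2}$ retains only its leading summand $\frac{q^{-2N-2}}{(1-q^{-1})^2}$ in the truncation window, since the other two summands have top degrees $-3N-2$ and $-4N-2$. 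Collecting, $(1+\alpha+\beta)^2$ agrees, to the top $3N+1$ coefficients, with $1+2\alpha+2\beta+\frac{q^{-2N-2}}{(1-q^{-1})^2}$, which is precisely the bracketed factor in (2.9). The main obstacle is the combinatorial bookkeeping for $e_2$: enumerating the pairs correctly and matching the piecewise count $\lfloor (m-1)/2\rfloor$ to the rational-function expansion; the remaining steps are routine algebra confined to the truncation window.
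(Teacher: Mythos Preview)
Your proposal is correct and follows essentially the same approach as the paper: both reduce $\{2N\}!/((-1)^N\{N\}!)$ to the product $\prod_{k=N+1}^{2N}(1-q^{-k})$, truncate to the contributions of $e_0,e_1,e_2$ by degree considerations, and then obtain (\ref{2Nsq}) by squaring and discarding subthreshold cross terms. The only minor difference is in evaluating the $e_2$ contribution: the paper rewrites $\sum_{k>l}q^{-2N-k-l}$ algebraically as $\tfrac12\bigl(\sum_{k,l}-\sum_{k=l}\bigr)$ and sums each piece geometrically, whereas you count the coefficient $\lfloor (m-1)/2\rfloor$ directly and match it to the partial-fraction expansion---both routes land on the same expression.
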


\begin{proof}[Proof of Lemma \ref{lem:2nfac}]
By (\ref{eq:nfactdef}), dividing by an appropriate power of $q$ and keeping track of terms that do not affect the highest $3N+1$ terms (here this corresponds to powers of $q$ of degree less than $-3N$ as our maximal degree term is $1$), we obtain

\begin{align}
\{2N\}! &= \{2N\}\{2N-1\}\dots\{N+1\}\{N\}! \nonumber\\
&= (q^{\frac{-2N}{2}}-q^{\frac{2N}{2}})(q^{\frac{-(2N-1)}{2}}-q^{\frac{2N-1}{2}})\dots(q^{\frac{-(N+1)}{2}}-q^{\frac{N+1}{2}})\{N\}! \nonumber\\
&= (-1)^{N}(q^{\frac{2N}{2}}-q^{\frac{-2N}{2}})(q^{\frac{2N-1}{2}}-q^{\frac{-(2N-1)}{2}})\dots(q^{\frac{N+1}{2}}-q^{\frac{-(N+1)}{2}})\{N\}! \nonumber\\
&\overset{\cdotp3N+1}{=} (-1)^{N}\{N\}!(1-q^{-2N})(1-q^{-(2N-1)})\dots(1-q^{-N-1}) \nonumber\\
&\overset{\cdotp3N+1}{=} (-1)^{N}\{N\}!\left(1-\sum^N_{i=1}q^{-N-i}+\sum^N_{\substack{k,l=1 \\ k>l}}q^{-2N-k-l}\right) \nonumber\\
&= (-1)^{N}\{N\}!\left(1-\frac{q^{-N-1}-q^{-2N-1}}{1-q^{-1}}+\frac{\sum\limits^N_{k,l=1}q^{-2N-k-l}}{2}-\frac{\sum\limits^N_{m=1}q^{-2N-2m}}{2}\right) \nonumber\\
&= (-1)^{N}\{N\}!\Bigg[1-\frac{q^{-N-1}-q^{-2N-1}}{1-q^{-1}}+\sum^N_{l=1}\frac{q^{-2N-1-l}-q^{-3N-1-l}}{2(1-q^{-1})} \nonumber\\
& \qquad \qquad \qquad -\frac{q^{-2N-2}-q^{-3N-2}}{2(1-q^{-2})}\Bigg] \nonumber\\
&\overset{\cdotp3N+1}{=} (-1)^{N}\{N\}!\left(1-\frac{q^{-N-1}-q^{-2N-1}}{1-q^{-1}}+\sum^N_{l=1}\frac{q^{-2N-1-l}}{2(1-q^{-1})}-\frac{q^{-2N-2}}{2(1-q^{-2})}\right) \nonumber\\
&=  (-1)^{N}\{N\}!\left(1-\frac{q^{-N-1}-q^{-2N-1}}{1-q^{-1}}+\frac{q^{-2N-2}-q^{-3N-2}}{2(1-q^{-1})^2}-\frac{q^{-2N-2}}{2(1-q^{-2})}\right) \nonumber\\
&\overset{\cdotp3N+1}{=} (-1)^{N}\{N\}!\left(1-\frac{q^{-N-1}-q^{-2N-1}}{1-q^{-1}}+\frac{q^{-2N-2}}{2(1-q^{-1})^2}-\frac{q^{-2N-2}}{2(1-q^{-2})}\right).
\end{align}
Equation (\ref{2Nsq}) follows upon squaring (\ref{2N}) and simplification.
\end{proof}

Here, the fifth line of (2.7) is given by how we can take none, exactly one or exactly two of the negative powers of $q$ when multiplying the terms in the fourth line of (2.7) without affecting the highest $3N+1$ terms.
The sum involving $k$ and $l$ is realised as a sum allowing $k=l$ and $k<l$. Dividing by $2$ removes the contributions of the $k<l$ terms and halves the contributions of the $k=l$ terms. Then the $m$-sum removes the other half of the $k=l$ terms.

\begin{lemma}

We have

\begin{equation}\label{3N+1}
\{3N+1\}! \overset{\cdotp3N+1}{=} (-1)^{N+1}\{2N\}!\left(1-\frac{q^{-2N-1}}{1-q^{-1}}\right)
\end{equation}

\noindent and 

\begin{equation}\label{3N}
\{3N\}! \overset{\cdotp3N+1}{=} (-1)^{N}\{2N\}!\left(1-\frac{q^{-2N-1}}{1-q^{-1}}\right).
\end{equation}
\label{lem:3n+1fac}
\end{lemma}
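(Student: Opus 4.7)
The plan is to mirror the proof of Lemma \ref{lem:2nfac} almost verbatim, splitting $\{3N+1\}!$ and $\{3N\}!$ as the product of $\{2N\}!$ with the additional $\{k\}$ factors for $k$ in $\{2N+1,\ldots,3N+1\}$ and $\{2N+1,\ldots,3N\}$, respectively. For each such $k$ I would rewrite $\{k\} = q^{-k/2}-q^{k/2} = -q^{k/2}(1-q^{-k})$, pull out the monomial $q^{k/2}$ and the sign from every factor, and then pass to $\overset{\cdotp 3N+1}{=}$ to discard all the $q^{k/2}$ prefactors in a single step.

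This reduces both identities to analysing
\begin{equation}
\prod_{i=1}^{r}(1 - q^{-2N-i})
\end{equation}
modulo corrections that affect only coefficients past the top $3N+1$, where $r=N+1$ for \eqref{3N+1} and $r=N$ for \eqref{3N}. The key simplification, and the reason the answer is so much cleaner than in Lemma \ref{lem:2nfac}, is that any term in the expansion of this product which involves two or more of the factors $q^{-2N-i}$ has degree at most $-4N-3$, well below the $-3N$ cutoff even after multiplication by the leading $1$ of $\{2N\}!$. The product therefore collapses, modulo $\overset{\cdotp 3N+1}{=}$, to $1 - \sum_{i=1}^{r}q^{-2N-i}$.

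It only remains to sum the resulting geometric series,
\begin{equation}
\sum_{i=1}^{N+1} q^{-2N-i} = \frac{q^{-2N-1}-q^{-3N-2}}{1-q^{-1}}, \qquad \sum_{i=1}^{N} q^{-2N-i} = \frac{q^{-2N-1}-q^{-3N-1}}{1-q^{-1}},
\end{equation}
and to observe that the numerator tails $q^{-3N-2}$ and $q^{-3N-1}$ fall outside the top $3N+1$ coefficients once divided by $1-q^{-1}$ and multiplied against $\{2N\}!$, so they may be dropped. Substituting back produces precisely \eqref{3N+1} and \eqref{3N}.

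I foresee no real obstacle: the argument is the same degree-tracking exercise as in Lemma \ref{lem:2nfac}, only easier because no second-order correction term survives in either identity. The sole point worth double-checking is that every monomial discarded via $\overset{\cdotp 3N+1}{=}$ genuinely sits below degree $-3N$ even when paired against the sub-leading coefficients of $\{2N\}!$; this follows from the same kind of degree estimate already used in the proof of Lemma \ref{lem:2nfac} and requires no new input.
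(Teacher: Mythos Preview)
Your proposal is correct and follows essentially the same approach as the paper: factor off $\{2N\}!$, flip signs to extract the $(-1)^{N+1}$ (respectively $(-1)^N$), pass to the normalised product $\prod(1-q^{-2N-i})$, drop quadratic and higher cross-terms by degree, sum the surviving geometric series, and discard the tail. The paper carries this out line by line for \eqref{3N+1} and then simply remarks that a similar calculation gives \eqref{3N}, exactly as you anticipated.
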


\begin{proof}[Proof of Lemma \ref{lem:3n+1fac}]

We follow the proof of Lemma 2.3 to obtain

\begin{align*}
\{3N+1\}! &= \{3N+1\}\{3N\}\dots\{2N+1\}\{2N\}! \\
&= (q^{\frac{-3N-1}{2}}-q^{\frac{3N+1}{2}})(q^{\frac{-3N}{2}}-q^{\frac{3N}{2}})\dots(q^{\frac{-2N-1}{2}}-q^{\frac{2N+1}{2}})\{2N\}! \\
&= (-1)^{N+1}(q^{\frac{3N+1}{2}}-q^{\frac{-3N-1}{2}})(q^{\frac{3N}{2}}-q^{\frac{-3N}{2}})\dots(q^{\frac{2N+1}{2}}-q^{\frac{-2N-1}{2}})\{2N\}! \\
&\overset{\cdotp3N+1}{=} (-1)^{N+1}\{2N\}!(1-q^{-3N-1})(1-q^{-3N})\dots(1-q^{-2N-1}) \\
&\overset{\cdotp3N+1}{=} (-1)^{N+1}\{2N\}!\left(1-\sum^{N+1}_{i=1}q^{-2N-i}\right) \\
&= (-1)^{N+1}\{2N\}!\left(1-\frac{q^{-2N-1}-q^{-3N-2}}{1-q^{-1}}\right) \\
&\overset{\cdotp3N+1}{=} (-1)^{N+1}\{2N\}!\left(1-\frac{q^{-2N-1}}{1-q^{-1}}\right). \\
\end{align*}

A similar calculation yields (\ref{3N}).

\end{proof}

\section{Proof of Theorem 1.1}

The structure of the proof of Theorem 1.1 is as follows. We will first prove the formula for our base case $(3^+,3^+,3^+)$ in which all of the negative twist regions have at least three half-twists. In this case, after applying (\ref{eq:cjp}), (\ref{3N+1}) and (\ref{2Nsq}), we obtain the highest $3N+1$ coefficients of the normalised colored Jones polynomial

\begin{equation}
    J'_{N+1,K}(q)=J_{N+1,K}(q)\cdot\frac{(-1)^N\{1\}}{\{N+1\}}.
    \label{eqn:normcjp}
\end{equation}

We obtain the 2-head $H_{2,K}(q)$ after subtracting the highest $3N+1$ coefficients of the head $H_K(q)$ and the shifted (by $q^{-N-1}$) 1-head $H_{1,K}(q)$ from the highest $3N+1$ coefficients of $J'_{N+1,K}(q)$.

We will then split the remaining nine cases into four groups depending on the number of negative twist regions with exactly one half-twist. For simplicity, we start by choosing the case with maximal $m$ within each group. Once we have proven this case, we use the argument from Section 4.3 in \cite{hall_2018} to prove cases within the same group. Namely, each negative twist region with exactly two half-twists will contribute a summand from (\ref{eq:cjpfull}) corresponding to a $j_i$ being $N-1$ and the corresponding $m_i$ equalling two. This summand does not contribute to the highest $2N+1$ coefficients and thus contributes to $H_{2,K}(q)$ in the same way as (4.29) in \cite{hall_2018}.
We now prove Theorem 1.1 in the following five sections.

\subsection{$(3^+,3^+,3^+)$}

We proceed with the base case $(3^+,3^+,3^+)$, a pretzel knot $K$ with three negative twist regions, each with at least three half-twists.

By (2.1), Lemmas 2.1 and 2.4 and (\ref{eqn:normcjp}), we have

\begin{align}
J'_{N+1,K}(q) &\overset{\cdotp3N+1}{=} \frac{\{3N+1\}!\{N\}!^3}{\{2N\}!^3\{N+1\}} \nonumber \\
&\overset{\cdotp3N+1}{=} \frac{(-1)^{N+1}\{N\}!^3}{\{2N\}!^2(q^{\frac{-N-1}{2}}-q^{\frac{N+1}{2}})}\left(1-\frac{q^{-2N-1}}{1-q^{-1}}\right). \label{eqn:aft3n1}
\end{align}

\noindent By Lemma 4.1 in \cite{hall_2018}, Lemma 2.3 and factoring out a power of $q$, we simplify (\ref{eqn:aft3n1}) to obtain

\begin{align}
J'_{N+1,K}(q) &\overset{\cdotp3N+1}{=} \frac{(-1)^N\{N\}!}{1-q^{-N-1}}\left(1-\frac{q^{-2N-1}}{1-q^{-1}}\right)\left(\frac{1}{1-2\cdot\frac{q^{-N-1}-q^{-2N-1}}{1-q^{-1}}+\frac{2q^{-2N-2}}{(1-q^{-1})^2}-\frac{q^{-2N-2}}{1-q^{-2}}}\right) \nonumber\\
&\overset{\cdotp3N+1}{=} \frac{(-1)^N\{N\}!}{1-q^{-N-1}}\left(1-\frac{q^{-2N-1}}{1-q^{-1}}\right)\Bigg(1+\Bigg(2\cdot\frac{q^{-N-1}-q^{-2N-1}}{1-q^{-1}}-\frac{2q^{-2N-2}}{(1-q^{-1})^2}+\frac{q^{-2N-2}}{1-q^{-2}}\Bigg) \nonumber \\
& \qquad  +\left(2\cdot\frac{q^{-N-1}-q^{-2N-1}}{1-q^{-1}}-\frac{2q^{-2N-2}}{(1-q^{-1})^2}+\frac{q^{-2N-2}}{1-q^{-2}}\right)^2\Bigg) \nonumber \\
&\overset{\cdotp3N+1}{=} \frac{(-1)^N\{N\}!}{1-q^{-N-1}}\left(1-\frac{q^{-2N-1}}{1-q^{-1}}\right)\Bigg(1+2\cdot\frac{q^{-N-1}-q^{-2N-1}}{1-q^{-1}}+\frac{2q^{-2N-2}}{(1-q^{-1})^2}+\frac{q^{-2N-2}}{1-q^{-2}}\Bigg).
\end{align}

\noindent We are set to subtract the head and 1-head. As the head is $\prod^\infty_{i=1}(1-q^{-i}) \overset{\cdotp N}{=} (-1)^N\{N\}!$ and the 1-head is given as (4.30) in \cite{hall_2018}, we subtract the first $3N+1$ coefficients of these two terms (shifting the 1-head by $q^{-N-1}$) to get 

\begin{align}
H_{2,K}(q) &\overset{\cdotp3N+1}{=}  \frac{(-1)^N\{N\}!}{1-q^{-N-1}}\left(1-\frac{q^{-2N-1}}{1-q^{-1}}\right)\left(1+2\cdot\frac{q^{-N-1}-q^{-2N-1}}{1-q^{-1}}+\frac{2q^{-2N-2}}{(1-q^{-1})^2}+\frac{q^{-2N-2}}{1-q^{-2}}\right) \nonumber\\
 &  \qquad -(-1)^{N}\{3N\}! - q^{-N-1}\left((-1)^{N}\{3N\}!+\frac{3(-1)^{N}\{3N\}!}{1-q^{-1}}\right) \nonumber\\
 &\overset{\cdotp3N+1}{=}\frac{(-1)^N\{N\}!}{(1-q^{-N-1})(1-q^{-1})^3(1-q^{-2})} \bigg(1-3q^{-1}+2q^{-2}+2q^{-3}-3q^{-4}+q^{-5}+2q^{-N-1}  \nonumber\\
 & \qquad -4q^{-N-2}+4q^{-N-4}-2q^{-N-5}-3q^{-2N-1}+9q^{-2N-2}-5q^{-2N-3}-5q^{-2N-4} \nonumber\\
 &  \qquad +4q^{-2N-5}\bigg)-(-1)^{N}\{3N\}!- q^{-N-1}\left((-1)^{N}\{3N\}!+\frac{3(-1)^{N}\{3N\}!}{1-q^{-1}}\right).
\end{align}

After applying (\ref{3N}) followed by (\ref{2N}) and some routine (but tedious) calculations, we obtain

\begin{align} 
 H_{2,K}(q)& \overset{\cdotp 3N+1}{=}\frac{(-1)^N\{N\}!}{(1-q^{-N-1})(1-q^{-1})^4(1-q^{-2})}(-3q^{-2N-1}+19q^{-2N-2}-34q^{-2N-3}+14q^{-2N-4} \nonumber \\
 & \qquad +18q^{-2N-5}-20q^{-2N-6}+7q^{-2N-7}-q^{-2N-8}) \nonumber\\
 &\overset{\cdotp3N+1}{=}\frac{(-1)^N\{N\}!q^{-2N-1}}{(1-q^{-1})^4(1-q^{-2})}(-3+19q^{-1}-34q^{-2}+14q^{-3}+18q^{-4}-20q^{-5}+7q^{-6}-q^{-7}) \nonumber\\
 &\overset{\cdotp\infty}{=}\prod^\infty_{i=1}(1-q^{-i}) \left(\frac{-3+10q^{-1}+5q^{-2}-4q^{-3}+q^{-4}}{(1-q^{-1})(1-q^{-2})}\right),
 \label{eqn:-9_35}
\end{align}

\noindent as required.

Here we see that when we subtract we get $2N + 1$ copies of zero in our list of coefficients and then the first $N$ coefficients of the $2$-head are given by the first $N$ coefficients of the expression in the last line, which is independent of $N$.

We note that the proof of (\ref{eqn:-9_35}) begins with (\ref{eq:cjp}) which results from only considering the summand $S_{N,N,N}$ in (\ref{eq:cjpfull}). The remaining summands in (\ref{eq:cjpfull}) do not contribute to the first $3N+1$ coefficients as each of the $m_i$ are at least three.  For the remaining nine cases, more care is required. As lower values for one or more of the $m_i$'s occur, we consider the normalised summands 

$$\overline{S_{j_1, j_2, j_3}} = S_{j_1, j_2, j_3} \cdot \frac{(-1)^N \{ 1 \}}{\{N+1\}}$$

\noindent in the following lemmas.

\begin{lemma}
We have

$$\overline{S_{N-1,N,N}}\overset{\cdotp 2N}{=}\frac{-\prod^\infty_{i=1}(1-q^{-i})}{1-q^{-1}}\left(1+\frac{2q^{-(N+1)}}{1-q^{-1}}-\frac{q^{-(2N-1)}}{1-q^{-1}}\right).$$
\label{Sum:n-1}
\end{lemma}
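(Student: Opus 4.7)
The plan is to evaluate $\overline{S_{N-1,N,N}}$ by direct substitution of $(j_1,j_2,j_3)=(N-1,N,N)$ into formula (2.3) for $S_{j_1,j_2,j_3}$, and then to reduce the resulting closed form to the top-$2N$ expression in the statement via Lemmas 2.2--2.4. Two of the three factors in the product over $i$ correspond to the triple $(N,N,2N)$ (from $j_2 = j_3 = N$), and one to $(N,N,2N-2)$ (from $j_1 = N-1$, with $m_1 = 2$). Both triangles are close to degenerate: for $(N,N,2N)$ one has $k = 0$, while for $(N,N,2N-2)$ one has $k = 1$. Consequently $\gamma$, $\theta$ and $\Delta$ at these triples reduce to short explicit expressions in $A$ and quantum integers; in particular $\Delta_{2N}/\theta(N,N,2N) = [2N]!/[N]!$, while $\Delta_{2N-2}/\theta(N,N,2N-2)$ becomes a simple rational expression in $[N]$ and $[2N-1]$ with an explicit sign.

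Next I would substitute $\Gamma_{N,(N-1,N,N)}$ using formula (4.2) in \cite{hall_2018}. When one of the colour labels is $N-1$ rather than $N$, many of the admissibility triangle inequalities in the multisum become tight, so the sum collapses to a manageable closed form. Assembling all the pieces, together with the overall power of $A$ generated by $\gamma(N,N,2N-2)^2 \gamma(N,N,2N)^{m_2+m_3}$ and the normalisation $(-1)^N\{1\}/\{N+1\}$, yields $\overline{S_{N-1,N,N}}$ as a ratio of quantum factorials times a power of $A$. Since the relation $\overset{\cdotp 2N}{=}$ ignores an overall degree shift, the cumulative $A$-power may be absorbed harmlessly.

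The final step is asymptotic simplification: Lemma 2.3 expands $\{2N\}!$ in terms of $\{N\}!$ up to the accuracy needed for the top $3N+1$ coefficients, and Lemma 2.2 rewrites $(-1)^N\{N\}!$ as $\prod_{i=1}^\infty(1-q^{-i})$ up to $\overset{\cdotp\infty}{=}$. The three summands $1$, $2q^{-(N+1)}/(1-q^{-1})$ and $-q^{-(2N-1)}/(1-q^{-1})$ in the target should then emerge as the leading term and the two leading corrections, with the shifts $N+1$ and $2N-1$ tracking respectively the degree cost of the $j_1 = N-1$ specialisation and the second-order corrections produced by Lemma 2.3. The main obstacle will be the careful bookkeeping of these $q$-power shifts and cancellations: one must ensure that the exponents of $A$ in $\gamma(N,N,2N-2)^2$, $\gamma(N,N,2N)^{m_2+m_3}$, $\theta$ and $\Delta$ combine cleanly and that the three correction terms assemble exactly into the bracketed factor of the statement. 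The truncation at $2N$ coefficients (as opposed to $3N+1$) is natural, since the $j_1 = N-1$ specialisation already carries a degree deficit of roughly $N+1$ compared to $S_{N,N,N}$, degrading the $3N+1$-accurate expansions of Lemma 2.3 precisely to the claimed $2N$-accuracy here.
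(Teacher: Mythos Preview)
Your approach is essentially the same as the paper's. The paper short-circuits the first half of your plan by importing the closed form
\[
\overline{S_{N-1,N,N}}\ \overset{\cdotp\infty}{=}\ \frac{\{3N\}!\,\{N\}!^3}{\{1\}\,\{2N-2\}!\,\{2N\}!^2\,\{2N\}}
\]
directly from Section~4.3 of \cite{hall_2018}, rather than re-deriving it by computing $\gamma$, $\theta$, $\Delta$ and the collapsed $\Gamma$-multisum from scratch; it then applies Lemma~2.4 (equation~(\ref{3N})) to replace $\{3N\}!$ and expands the remaining ratio of quantum factorials to obtain the bracketed correction factor, exactly as you describe. One small remark: there is no need to fix $m_1=2$. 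Since $\gamma(N,N,2N-2)^{m_1}$ contributes only a sign and an overall power of $A$, both of which are absorbed by the relation $\overset{\cdotp 2N}{=}$, the lemma holds for arbitrary $m_1$, and indeed it is applied later in the paper both with $m_i=2$ (Section~3.2) and with $m_i=1$ (Section~3.3).
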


\begin{proof}[Proof of Lemma \ref{Sum:n-1}]

By (\ref{eqn:Nfac}), (\ref{3N}) and Section 4.3 of \cite{hall_2018} and after simplification, we have 

\begin{align*}
    \overline{S_{N-1,N,N}} & \overset{\cdotp \infty}{=} \frac{\{3N\}!\{N\}!^3}{\{1\}\{2N-2\}!\{2N\}!^2\{2N\}} \\
    &\overset{\cdotp 2N}{=}\frac{(-1)^{N}\{N\}!^3}{\{1\}\{2N-2\}!\{2N\}!\{2N\}} \\
    &\overset{\cdotp 2N}{=}\frac{(-1)^{N}\{N\}!}{1-q^{-1}}\left(1+2q^{-(N+1)}+2q^{-(N+2)}+\dots+2q^{-(2N-2)}+q^{-(2N-1)}+2q^{-2N} \right)  \\
    &\overset{\cdotp 2N}{=}\frac{\prod^\infty_{i=1}(1-q^{-i})}{1-q^{-1}}\left(1+\frac{2q^{-(N+1)}}{1-q^{-1}}-\frac{q^{-(2N-1)}}{1-q^{-1}}\right). \\
    \end{align*}
\end{proof}

\begin{lemma}
We have 

$$\overline{S_{N-2,N,N}}\overset{\cdotp N}{=}\frac{\prod^\infty_{i=1}(1-q^{-i})}{(1-q^{-2})(1-q^{-1})}.$$
\label{sumn-2}
\end{lemma}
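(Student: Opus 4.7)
My plan is to follow the template of the proof of Lemma \ref{Sum:n-1} closely, since the case $(j_1,j_2,j_3) = (N-2,N,N)$ is combinatorially one step further from the base case $(N,N,N)$ than the case treated there. First I would apply equation (\ref{eq:cjpfull}) with $(j_1,j_2,j_3) = (N-2,N,N)$, using the explicit formulas for $\gamma$, $\Delta$, and $\theta$ from Section 2 together with the expression for $\Gamma_{N,(N-2,N,N)}$ coming from Section 4.3 of \cite{hall_2018}. Since $\gamma(N,N,2j_i)$ is a single monomial in $A$, its entire contribution is absorbed into the relation $\overset{\cdotp \infty}{=}$, and after cancellations I expect a factorial expression roughly of the shape
$$\overline{S_{N-2,N,N}} \overset{\cdotp \infty}{=} \frac{\{3N-1\}!\{N\}!^3}{\{1\}\{2N-4\}!\{2N\}!^2 \cdot C(N)},$$
where $C(N)$ is a short product of factors such as $\{2N\}$, $\{2N-1\}$, and the small factor $[2]!$ coming from $\theta(N,N,2N-4)$.

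Next, since the target only requires matching the highest $N$ coefficients, I can apply the asymptotic reductions of Lemmas \ref{lem:2nfac} and \ref{lem:3n+1fac} keeping only leading terms: any correction of order $q^{-N-1}$ or lower is irrelevant modulo $\overset{\cdotp N}{=}$. This should reduce the factorial expression to $(-1)^N \{N\}!$ multiplied by a rational function of $q^{-1}$ that is independent of $N$. Applying Lemma 2.2 then replaces $(-1)^N\{N\}!$ by $\prod_{i=1}^\infty (1-q^{-i})$, and the remaining rational factor should collapse to $1/((1-q^{-1})(1-q^{-2}))$ after absorbing an overall power of $q$ into $\overset{\cdotp \infty}{=}$.

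The main obstacle will be the bookkeeping in step one: the jump from $j_1 = N-1$ to $j_1 = N-2$ introduces an extra $[2]!$ in the trihedron coefficient $\theta(N,N,2N-4)$ and replaces $\{2N-2\}!$ by $\{2N-4\}!$, and the trivalent multisum $\Gamma_{N,(N-2,N,N)}$ is genuinely more intricate than the one for $(N-1,N,N)$. The weak truncation $\overset{\cdotp N}{=}$ in the statement (as opposed to the $\overset{\cdotp 3N+1}{=}$ truncation used in the base case) is what makes the rest of the argument manageable, because the dangerous subleading corrections from Lemmas \ref{lem:2nfac} and \ref{lem:3n+1fac} all sit below the $N$-th coefficient and can be discarded without further analysis. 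Once the combinatorial identification is correct, matching the denominator $(1-q^{-2})(1-q^{-1})$ should be immediate from two surviving small factors in $C(N)$ that respectively limit to $1-q^{-1}$ and $1-q^{-2}$ as $N \to \infty$.
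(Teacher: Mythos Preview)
Your plan is correct and mirrors the paper's proof. One small correction: the closed form for $\Gamma_{N,(N-2,N,N)}$ is drawn from Appendix~A of \cite{hall_2018} together with Lemma~14.5 of \cite{Lickorish} rather than Section~4.3, yielding the intermediate expression $\dfrac{\{3N-1\}!\,\{2N-3\}\,\{N+2\}!\,\{N\}!^2}{\{2N\}!^2\,\{2N-1\}!\,\{2\}!\,\{N+1\}}$, which after the $\overset{\cdotp N}{=}$ truncation collapses directly to $(-1)^N\{N\}!/\{2\}!$, with $\{2\}!$ supplying your anticipated denominator $(1-q^{-1})(1-q^{-2})$.
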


\begin{proof}[Proof of Lemma \ref{sumn-2}]

By (\ref{eq:cjpfull}), (\ref{eqn:Nfac}), Appendix A in \cite{hall_2018} and Lemma 14.5 in \cite{Lickorish}, we have

\begin{align*}
\overline{S_{N-2,N,N}} 
&=\frac{\{1\}(-1)^N}{\{N+1\}}\gamma(N,N,2N-4)^{m_1}\gamma(N,N,2N)^{m_2+m_3}\frac{\Delta_{2N-4}}{\theta(N,N,2N-4)}\left(\frac{\Delta_{2N}}{\theta(N,N,2N)}\right)^2 \\
& \qquad \times\left(\frac{\Delta_{3N-2}!\Delta_{N+1}!\Delta_{N-3}!^2}{\Delta_{2N-5}!\Delta_{2N-1}!^2}\right) \\
&\overset{\cdotp\infty}{=}\frac{\{1\}(-1)^N}{\{N+1\}}\frac{\Delta_{2N-4}}{\theta(N,N,2N-4)}\left(\frac{\Delta_{3N-2}!\Delta_{N+1}!\Delta_{N-3}!^2}{\Delta_{2N-5}!\Delta_{2N-1}!^2}\right) \\
&\overset{\cdotp\infty}{=}\frac{\{3N-1\}!\{2N-3\}\{N+2\}!\{N\}!^2}{\{2N\}!^2\{2N-1\}!\{2\}!\{N+1\}} \\ 
&\overset{\cdotp N}{=}\frac{(-1)^N\{N\}!}{\{2\}!} \\
&\overset{\cdotp \infty}{=}\frac{\prod^\infty_{i=1}(1-q^{-i})}{(1-q^{-2})(1-q^{-1})}. \\
\end{align*}
\end{proof}

\begin{lemma}
We have

$$\overline{S_{N-3,N,N}}\overset{\cdotp N}{=}\frac{-\prod^\infty_{i=1}(1-q^{-i})}{(1-q^{-3})(1-q^{-2})(1-q^{-1})}.$$
\label{sumn-3}
\end{lemma}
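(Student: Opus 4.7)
The strategy is a direct extension of the proof of Lemma \ref{sumn-2} with $j_1 = N-3$ in place of $j_1 = N-2$. First, I would write out $\overline{S_{N-3,N,N}}$ using (\ref{eq:cjpfull}) together with the definition of the normalisation factor, obtaining
$$\overline{S_{N-3,N,N}} = \frac{\{1\}(-1)^N}{\{N+1\}}\gamma(N,N,2N-6)^{m_1}\gamma(N,N,2N)^{m_2+m_3}\frac{\Delta_{2N-6}}{\theta(N,N,2N-6)}\left(\frac{\Delta_{2N}}{\theta(N,N,2N)}\right)^{2}\Gamma_{N,(N-3,N,N)}.$$
Because the half-twist coefficients $\gamma(a,b,c)$ are monomials in $A$ (up to sign), they contribute only a sign and a power of $q$, both of which are absorbed by $\overset{\cdotp \infty}{=}$.

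The next step is to invoke Appendix A of \cite{hall_2018} to read off the closed form of $\Gamma_{N,(N-3,N,N)}$, which by analogy with the $\Gamma_{N,(N-2,N,N)}$ case will be a product and quotient of $\Delta$-factorials with arguments shifted by one; combined with the definitions of $\Delta_n$ and $\theta(a,b,c)$ in terms of $[\cdot]!$, Lemma 14.5 in \cite{Lickorish} collapses the expression to a ratio of ordinary $\{\cdot\}!$'s analogous to the ratio $\{3N-1\}!\{2N-3\}\{N+2\}!\{N\}!^2/(\{2N\}!^2\{2N-1\}!\{2\}!\{N+1\})$ that appeared in Lemma \ref{sumn-2}, but with the denominator factor $\{2\}!$ replaced by $\{3\}!$ and with the factorial arguments in the numerator shifted accordingly. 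This step is essentially bookkeeping: I would just mimic Hall's simplification one step further down the diagonal $j_1 = N-k$.

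Finally, taking the top $N$ coefficients, the factorial ratio collapses dramatically: everything but $(-1)^N\{N\}!/\{3\}!$ disappears since each surviving block $\{n\}!$ for $n > N$ begins with a $q^0$ coefficient equal to $\pm 1$ that cancels in the quotient. This gives $\overline{S_{N-3,N,N}} \overset{\cdotp N}{=} -(-1)^N\{N\}!/\{3\}!$, at which point (\ref{eqn:Nfac}) and $\{3\}! \overset{\cdotp \infty}{=} (1-q^{-1})(1-q^{-2})(1-q^{-3})$ yield the claimed identity.

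The main obstacle is the sign: Lemma \ref{sumn-2} produces a $+$, whereas here the answer carries a $-$. This flip comes from the factor $(-1)^{i+j+k}$ in $\theta(N,N,2N-6)$, where $k=(N+N-(2N-6))/2 = 3$ is odd, as opposed to $k=2$ in the $j_1=N-2$ case. Tracking this parity shift through the simplification (together with the corresponding parity of $\Delta_{2N-6}$ and of the $\Gamma$ factor from Hall's Appendix~A) is the only delicate part; once that is in place, the reduction to $\overset{\cdotp N}{=} -(-1)^N\{N\}!/\{3\}!$ is routine.
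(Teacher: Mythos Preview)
Your proposal is correct and follows essentially the same approach as the paper's own proof: write out $\overline{S_{N-3,N,N}}$ from (\ref{eq:cjpfull}), absorb the $\gamma$ factors under $\overset{\cdotp\infty}{=}$, read off $\Gamma_{N,(N-3,N,N)}$ from (A.1) in \cite{hall_2018}, collapse via Lemma~14.5 of \cite{Lickorish} to a ratio of $\{\cdot\}!$'s with $\{3\}!$ in the denominator, and then reduce under $\overset{\cdotp N}{=}$ using (\ref{eqn:Nfac}). Your explicit discussion of the sign flip via the parity of $k=3$ in $\theta(N,N,2N-6)$ is a welcome addition, as the paper simply records the resulting sign without comment.
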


\begin{proof}[Proof of Lemma \ref{sumn-3}]

By (\ref{eq:cjpfull}), (\ref{eqn:Nfac}), (A.1) in \cite{hall_2018} and Lemma 14.5 in \cite{Lickorish}, we have

\begin{align*}
\overline{S_{N-3,N,N}}&=\frac{\{1\}(-1)^N}{\{N+1\}}\gamma(N,N,2N-6)^{m_1}\gamma(N,N,2N)^{m_2+m_3}\frac{\Delta_{2N-6}}{\theta(N,N,2N-6)} \\
& \qquad \times\left(\frac{\Delta_{2N}}{\theta(N,N,2N)}\right)^2\left(\frac{\Delta_{3N-3}!\Delta_{N+2}!\Delta_{N-4}!^2}{\Delta_{2N-7}!\Delta_{2N-1}!^2}\right) \\
&\overset{\cdotp\infty}{=}-\frac{\{3N-2\}!\{2N-5\}\{N+3\}!\{N\}!^2}{\{N+1\}\{2N\}!^2\{2N-2\}!\{3\}!} \\ 
&\overset{\cdotp N}{=}\frac{(-1)^N\{N\}!}{\{3\}!} \\
&\overset{\cdotp \infty}{=}-\frac{\prod^\infty_{i=1}(1-q^{-i})}{(1-q^{-3})(1-q^{-2})(1-q^{-1})}. \\
\end{align*}
\end{proof}

\begin{lemma}
We have 

$$\overline{S_{N,N-1,N-1}}\overset{\cdotp N}{=}\prod^\infty_{i=1}(1-q^{-i})\frac{1-q^{-(N-1)}}{(1-q^{-1})^2}.$$
\label{sumn-1n-1}
\end{lemma}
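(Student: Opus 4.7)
The plan is to mirror exactly the template used in the proofs of Lemmas \ref{Sum:n-1}--\ref{sumn-3}, specialising (\ref{eq:cjpfull}) at $(j_1,j_2,j_3) = (N,N-1,N-1)$ and simplifying. The starting point is
$$\overline{S_{N,N-1,N-1}} = \frac{\{1\}(-1)^N}{\{N+1\}}\,\gamma(N,N,2N)^{m_1}\gamma(N,N,2N-2)^{m_2+m_3}\,\frac{\Delta_{2N}}{\theta(N,N,2N)}\left(\frac{\Delta_{2N-2}}{\theta(N,N,2N-2)}\right)^{\!2}\Gamma_{N,(N,N-1,N-1)}.$$
Under $\overset{\cdotp\infty}{=}$ the half-twist coefficients $\gamma(N,N,2j_i)^{m_i}$ disappear, since they contribute only a sign and a monomial power of $q$.

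Next I would substitute the closed form for $\Gamma_{N,(N,N-1,N-1)}$ from Appendix A of \cite{hall_2018} (the analogue of (A.1) used in Lemmas \ref{sumn-2} and \ref{sumn-3}) and the explicit trihedron values $\theta(N,N,2N)$, $\theta(N,N,2N-2)$ supplied by Lemma 14.5 of \cite{Lickorish}, then expand every $\Delta_n!$ via $\Delta_n = (-1)^n[n+1]$ and $[n]=\{n\}/\{1\}$. The Weyl-type quotients will cancel in pairs, leaving a ratio of curly-bracket factorials whose leading behaviour in the highest $N$ coefficients reduces, via (\ref{eqn:Nfac}), to
$$\overline{S_{N,N-1,N-1}} \overset{\cdotp N}{=} \frac{(-1)^N\{N\}!\,(1-q^{-(N-1)})}{\{1\}(1-q^{-1})} \overset{\cdotp\infty}{=} \prod_{i=1}^{\infty}(1-q^{-i})\,\frac{1-q^{-(N-1)}}{(1-q^{-1})^{2}},$$
which is the claimed identity.

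The main subtlety, and the point of departure from the previous three lemmas, is that here \emph{two} of the three coordinates are perturbed away from $N$ rather than just one. Consequently, after the factorials cancel, the residual dependence on $N$ does not collapse to an $N$-free expression as it did in Lemmas \ref{sumn-2} and \ref{sumn-3}; instead, a genuinely $N$-dependent factor of the form $(1-q^{-(N-1)})$ persists. Since we are keeping the highest $N$ coefficients, this factor is not negligible, and the delicate bookkeeping step is to verify that it survives the cancellation with precisely this shape — neither absorbed into the infinite product nor truncated away — before invoking (\ref{eqn:Nfac}). Everything else is a direct adaptation of the calculations already carried out three times in the paper.
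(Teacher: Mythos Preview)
Your proposal is correct and follows essentially the same route as the paper: specialise (\ref{eq:cjpfull}) at $(N,N-1,N-1)$, drop the $\gamma$-factors under $\overset{\cdotp\infty}{=}$, insert the explicit $\Gamma_{N,(N,N-1,N-1)}$ from \cite{hall_2018} and the trihedron values from \cite{Lickorish}, and reduce the resulting quotient of quantum factorials via (\ref{eqn:Nfac}). The paper records one more intermediate step than you do --- it displays the simplified ratio $\dfrac{[3N-1]![N]!^3[N-1]}{[2N]^2[2N]![2N-2]!^2}$ before passing to the top $N$ coefficients, which makes transparent exactly where the surviving factor $[N-1]\overset{\cdotp\infty}{=}(1-q^{-(N-1)})/(1-q^{-1})$ comes from --- but your identification of that $N$-dependent remnant as the only new feature is on the mark.
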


\begin{proof}[Proof of Lemma \ref{sumn-1n-1}]

Proceeding as in the proof of Lemma \ref{sumn-3}, we obtain

\begin{align*}
\overline{S_{N,N-1,N-1}} &=\frac{\{1\}(-1)^N}{\{N+1\}}\gamma(N,N,2N)^{m_1}\gamma(N,N,2N-2)^{m_2+m_3}\frac{\Delta_{2N}}{\theta(N,N,2N)} \\
& \qquad \times\left(\frac{\Delta_{2N-2}}{\theta(N,N,2N-2)}\right)^2 \left(\frac{\Delta_{N-2}}{\Delta_{N-1}}\right)^2 \left(\frac{\Delta_{3N-2}!\Delta_{N-1}!^2\Delta_{N-3}!}{\Delta_{2N-1}!\Delta_{2N-3}!^2}\right) \\
&\overset{\cdotp\infty}{=}\frac{\{1\}(-1)^N}{\{N+1\}}\left(\frac{\Delta_{2N-2}}{\theta(N,N,2N-2)}\right)^2\left(\frac{\Delta_{N-2}}{\Delta_{N-1}}\right)^2\left(\frac{\Delta_{3N-2}!\Delta_{N-1}!^2\Delta_{N-3}!}{\Delta_{2N-1}!\Delta_{2N-3}!^2}\right) \\
&=\frac{[3N-1]![N]!^3[N-1]}{[2N]^2[2N]![2N-2]!^2} \\ 
&\overset{\cdotp N}{=}\frac{(-1)^{N-1}\{N\}!\{N-1\}}{\{1\}^2} \\
&\overset{\cdotp \infty}{=}\prod^\infty_{i=1}(1-q^{-i})\frac{1-q^{-(N-1)}}{(1-q^{-1})^2}. \\
\end{align*}
\end{proof}

\subsection{$(3^+,3^+,2), (3^+,2,2), (2,2,2)$}

For any pretzel knot $K$ with three negative twist regions each with at least two half-twists, we have in (\ref{eq:cjpfull}) that $m_i$ is $2$ if the $i$th negative twist region has two half-twists and $m_i \geq 3$ otherwise. By Corollary 3.5 in \cite{hall_2018}, the maximal $q$-degree, say $a$, arises from the $j_1=j_2=j_3=N$ term in (\ref{eq:cjpfull}). It follows from Lemmas 3.2--3.4 in \cite{hall_2018} that for each $m_i$ equal to 2, the contribution from the corresponding $j_i=N-1$ term has maximal $q$-degree $a-2N-1$. No other term contributes to $H_{2,K}(q)$ as decreasing any of the $j_i$ corresponding to an $m_i \geq 3$ decreases the maximum $q$-degree by at least $3N+1$. Decreasing any $j_i$ corresponding to an $m_i=2$ from $N-1$ to $N-2$ decreases the maximum $q$-degree by either $4N$ or $4N+1$, both of which are greater than or equal to $3N+1$ for all $N \geq 1$. Any combinations of decreases in multiple $j_i$'s would further decrease the maximum $q$-degree and thus not contribute to $H_{2,K}(q)$. In total, by (\ref{eqn:-9_35})   

\begin{align*}
H_{2,K}(q)&\overset{\cdotp N}{=}H_{2,-9_{35}}(q)+nq^{2N+1}\overline{S_{N,N,N-1}} 
\end{align*}

\noindent and thus

$$H_{2,K}(q)=\prod^\infty_{i=1}(1-q^{-i}) \left(\frac{-3+10q^{-1}+5q^{-2}-4q^{-3}+q^{-4}}{(1-q^{-1})(1-q^{-2})}\right)+n\prod^\infty_{i=1}(1-q^{-i})\left(\frac{1}{1-q^{-1}}\right). $$

\subsection{$(3^+,3^+,1),(3^+,2,1),(2,2,1)$}

We first prove for $K$ a knot in the family $(3^+,3^+,1)$ that $H_{2,K}(q)$ is given by 

$$\prod^\infty_{i=1}(1-q^{-i})\left(\frac{-2+7q^{-1}+2q^{-2}-4q^{-3}+q^{-4}}{(1-q^{-1})(1-q^{-2})}\right).$$

To see this, first note that $H_{2,K}(q)$ is the series that has its first $N$ coefficients the $(2N+1)$st to $(3N+1)$st coefficients of

$$J'_{N+1,K}(q)-H_K(q)-q^{N+1}H_{1,K}.$$

Without loss of generality, take $K$ to be the $(-3,-3,-1)$ pretzel knot from the family $(3^+,3^+,1)$ and $-9_{35}$ (which is the $(-3,-3,-3)$ pretzel knot) from the family $(3^+,3^+,3^+)$. By Corollary 3.5 in \cite{hall_2018}, the only summands in (\ref{eq:cjpfull}) that contribute to $J'_{N+1,K}(q)$ yet do not contribute to the first $3N+1$ coefficients of $J'_{N+1,-9_{35}}(q)$ are those coming from decreasing the $j_i$ term, say $j_3$, corresponding to the twist region with only one half-twist. By considering the maximal $q$-degree in

$$\gamma(N,N,2j_3)\frac{\Delta_{2j_3}}{\theta(N,N,2j_3)}\Gamma_{N(N,N,j_3)},$$

\noindent where $0\leq j_3 \leq N$, we need only consider the terms corresponding to $j_3=N,N-1,N-2 \text{ and }N-3$, with respective maximal $q$-degrees $a,\ a-N-1,\  a-2N-1,\  a-3N$.

Using (\ref{eqn:-9_35}), we express $H_{2,K}(q)$ as

\begin{align*}
&H_{2,-9_{35}}(q)+q^{2N+1}H_{-9_{35}}(q)+q^{N}H_{1,-9_{35}}(q)+\overline{S_{N,N,N-1}}+\overline{S_{N,N,N-2}}+\overline{S_{N,N,N-3}}-q^{2N+1}H_K(q) \\
& \quad -q^{N}H_{1,K}(q).
\end{align*}

Applying Lemmas 3.1--3.3, the fact that $H_{-9_{35}}(q)=H_K(q)$ and (1.5) from \cite{hall_2018} yields (after simplification)

\begin{align}
    H_{2,K}(q)&\overset{\cdotp 3N+1}{=}H_{2,-9_{35}}(q)+q^{N}(H_{1,-9_{35}}(q)-H_{1,K}(q))+\overline{S_{N,N,N-1}}+\overline{S_{N,N,N-2}}+\overline{S_{N,N,N-3}} \nonumber\\
    &\overset{\cdotp N}{=}H_{2,-9_{35}}(q)+q^N\left(\frac{\prod^\infty_{i=1}(1-q^{-i})}{1-q^{-1}}-\frac{\prod^\infty_{i=1}(1-q^{-i})(q^{-(N+1)})}{(1-q^{-1})^2}\right) \nonumber \\
    & \qquad -q^N\left(\frac{\prod^\infty_{i=1}(1-q^{-i})}{1-q^{-1}}\left(1+\frac{2q^{-(N+1)}}{1-q^{-1}}-\frac{q^{-(2N-1)}}{1-q^{-1}}\right)\right) \nonumber \\
    & \qquad +\frac{\prod^\infty_{i=1}(1-q^{-i})}{(1-q^{-1})(1-q^{-2})}-\frac{q^{-(N-1)}\prod^\infty_{i=1}(1-q^{-i})}{(1-q^{-1})(1-q^{-2})(1-q^{-3})}
    \label{eq:-7_4diff_full}\\
    &\overset{\cdotp N}{=}\prod^\infty_{i=1}(1-q^{-i})\left(\frac{-2+7q^{-1}+2q^{-2}-4q^{-3}+q^{-4}}{(1-q^{-1})(1-q^{-2})}\right). \nonumber
\end{align}

Now, let $K'$ be any knot in the family $(3^+,2,1)$ and consider the link $L_5a_1$ which is $(2,2,1)$. If we do not pick up additional terms in $J'_{N+1,K'}(q)$ due to a combination of decreases in the $j_i$'s, then we can apply the same argument as in Section 3.2. A decrease from a $j_i$ corresponding to a twist region with two half-twists from $N$ to $N-1$, coupled with a decrease from a $j_i$ corresponding to a twist region with one half-twist from $N$ to $N-1$ decreases the maximal $q$-degree of the corresponding summand by at least $3N+2$. As this is the smallest possible decrease in $q$-degree due to combinations of decreases of the $j_i$'s, there is no further contribution to $H_{2,K'}(q)$ aside from those discussed in Section 3.2. Thus, we get

$$H_{2,K'}(q)=\prod^\infty_{i=1}(1-q^{-i})\left(\frac{-1+7q^{-1}+q^{-2}-4q^{-3}+q^{-4}}{(1-q^{-1})(1-q^{-2})}\right)$$

\noindent and

$$H_{2,L_5a_1}(q)=\prod^\infty_{i=1}(1-q^{-i})\left(\frac{7q^{-1}-4q^{-3}+q^{-4}}{(1-q^{-1})(1-q^{-2})}\right).$$

\subsection{$(3^+,1,1),(2,1,1)$}

Let $K$ be a knot in the family $(3^+,1,1)$. Our first step is to prove that 

$$H_{2,K}(q)=\prod^\infty_{i=1}(1-q^{-i})\left(\frac{-1+4q^{-1}-3q^{-3}+q^{-4}}{(1-q^{-1})(1-q^{-2})}\right).$$

Proceeding as above, we express $H_{2,K}(q)-H_{2,-9_{35}}(q)$ as

\begin{align}
&H_{-9_{35}}(q)-H_K(q)+q^{-(N+1)}\left(H_{1,-9_{35}}(q)-H_{1,K}(q)\right)+2q^{-(2N+1)}\Bigg[H_{2,-9_{35}}(q)-H_{2,-7_4}(q) \nonumber
\\
& \quad +q^N\left(\frac{\prod^\infty_{i=1}(1-q^{-i})}{1-q^{-1}}-\frac{q^{-(N+1)}\prod^\infty_{i=1}(1-q^{-i})}{(1-q^{-1})^2}\right)\Bigg]+\overline{S_{N,N-1,N-1}}.
\label{eq:5_2diff}
\end{align}

This is due to the fact that each twist region in $K$ that has exactly one half-twist gives us a copy of the change in $2$-head between $-7_4$ (or any knot in the family $(3^+,3^+,1)$) and $-9_{35}$, provided that we take into account the difference, shifted by an appropriate power of $q$, between $H_{1,K} - H_{1,-9_{35}}$ and $H_{1,-7_4} - H_{1,-9_{35}}$. 

We also need to consider the summand  $\overline{S_{N,N-1,N-1}}$ as a result of decreasing the $j_i$'s corresponding to the twist regions with exactly one half-twist simultaneously. Any other combinations of decreases in the $j_i$'s leads to a decrease in the maximal $q$-degree of at least $3N+2$ and thus will not contribute to $H_{2,K}(q)$.

By (\ref{eq:-7_4diff_full}), (\ref{eq:5_2diff}), Lemmas 2.3, 3.1 and 3.4, the fact that $H_{-9_{35}}(q) = H_{K}(q)$ and Theorem 1.2 in \cite{hall_2018} we have

\begin{align}
    H_{2,K}(q) &\overset{\cdotp 3N+1}{=}H_{2,-9_{35}}(q)+q^{2N+1}\Bigg[H_{-9_{35}}(q)-H_K(q)+q^{-(N+1)}\left(H_{1,-9_{35}}(q)-H_{1,K}(q)\right) \nonumber \\
    & \qquad +2q^{-(2N+1)}\Bigg[H_{2,-9_{35}}(q)-H_{2,-7_4}(q)+q^N\Bigg(\frac{\prod^\infty_{i=1}(1-q^{-i})}{1-q^{-1}} \nonumber \\
    & \qquad -\frac{q^{-(N+1)}\prod^\infty_{i=1}(1-q^{-i})}{(1-q^{-1})^2}\Bigg)\Bigg]+\overline{S_{N,N-1,N-1}}\Bigg] \\
    &\overset{\cdotp 3N+1}{=}H_{2,-9_{35}}(q)+q^{2N+1}\Bigg[2q^{-(N+1)}\left(\frac{\prod^\infty_{i=1}(1-q^{-i})}{1-q^{-1}}-\prod^\infty_{i=1}(1-q^{-i})\frac{q^{-(N+1)}}{(1-q^{-1})^2}\right) \nonumber\\
    & \qquad-2q^{-(N+1)}\left(\frac{\prod^\infty_{i=1}(1-q^{-i})}{1-q^{-1}}\left(1+\frac{2q^{-(N+1)}}{1-q^{-1}}-\frac{q^{-(2N-1)}}{1-q^{-1}}\right)\right) \nonumber\\
    &\qquad +2q^{-(2N+1)}\frac{\prod^\infty_{i=1}(1-q^{-i})}{(1-q^{-1})(1-q^{-2})} -2q^{-3N}\frac{\prod^\infty_{i=1}(1-q^{-i})}{(1-q^{-1})(1-q^{-2})(1-q^{-3})} \nonumber\\
    & \qquad +q^{-(2N+3)}\frac{\prod^\infty_{i=1}(1-q^{-i})}{(1-q^{-1})}\Bigg] \nonumber\\
    &\overset{\cdotp N}{=}\prod^\infty_{i=1}(1-q^{-i})\left(\frac{-1+4q^{-1}-3q^{-3}+q^{-4}}{(1-q^{-1})(1-q^{-2})}\right). \nonumber
\end{align}

\bigskip

Here the term with prefactor $2q^{-(2N+1)}$ in (3.8) is a result of having two twist regions with exactly one half-twist in each. This gives you a copy of the change in $2$-head between $-9_{35}$ and $-7_4$ for each region. However, this term must be compensated for by adding the difference in $1$-heads mentioned above.

As for the $4_1$ knot, which is the $(2,1,1)$ pretzel knot, we may again apply the argument for going from $-7_4$ (or any knot in the family $(3^+,3^+,1)$) to $-6_2$ (or any knot in the family $(3^+,2,1)$) in order to get

$$H_{2,4_1}(q)=\prod^\infty_{i=1}(1-q^{-i})\left(\frac{4q^{-1}-q^{-2}-3q^{-3}+q^{-4}}{(1-q^{-1})(1-q^{-2})}\right).$$

\subsection{$(1,1,1)$}

Consider $3_1$ which is the $(1,1,1)$ pretzel knot. We claim that

\begin{equation}
H_{2,3_1}=\prod^\infty_{i=1}(1-q^{-i}).
\end{equation}

The difference $H_{2,3_1}(q)-H_{2,-9_{35}}(q)$ is given by

\begin{align}
&H_{-9_{35}}(q)-H_{3_1}(q)+q^{-(N+1)}\left(H_{1,-9_{35}}(q)-H_{1,3_1}(q)\right)+3q^{-(2N+1)}\Bigg[H_{2,-9_{35}}(q)-H_{2,-7_4}(q) \nonumber
\\
& \quad +q^N\left(\frac{\prod^\infty_{i=1}(1-q^{-i})}{1-q^{-1}}-\frac{q^{-(N+1)}\prod^\infty_{i=1}(1-q^{-i})}{(1-q^{-1})^2}\right)\Bigg]+3\overline{S_{N,N-1,N-1}}. \nonumber
\end{align}

This is due to the fact that each twist region with exactly one half-twist gives us a copy of the change in $2$-head between $-7_4$ (or any knot in the family $(3^+,3^+,1)$) and $-9_{35}$, provided that we take into account the difference, shifted by an appropriate power of $q$, between $H_{1,K} - H_{1,-9_{35}}$ and $H_{1,-7_4} - H_{1,-9_{35}}$.

We also need to consider the summand $\overline{S_{N,N-1,N-1}}$ as a result of decreasing the three possible pairs of $j_i$'s. Any other combinations of decreases in the $j_i$'s leads to a decrease in maximal $q$-degree of at least $3N+2$ and thus will not contribute to $H_{2,3_1}(q)$.

Thus, along with (\ref{eq:-7_4diff_full}), (\ref{eq:5_2diff}), Lemmas 2.3, 3.1 and 3.4, the fact that $H_{-9_{35}}(q) = H_{3_1}(q)$ and Theorem 1.2 in \cite{hall_2018}, following the same argument as Section 3.4, we obtain

\begin{align*}
    H_{2,3_1}(q) &\overset{\cdotp 3N+1}{=}H_{2,-9_{35}}+q^{2N+1}\Bigg[H_{-9_{35}}(q)-H_{3_1}(q)+q^{-(N+1)}\left(H_{1,-9_{35}}(q)-H_{1,3_1}(q)\right) \\
    & \qquad +3q^{-(2N+1)}\Bigg[H_{2,-9_{35}}(q)-H_{2,-7_4}(q)+q^N\Bigg(\frac{\prod^\infty_{i=1}(1-q^{-i})}{1-q^{-1}} \\
    & \qquad -\frac{q^{-(N+1)}\prod^\infty_{i=1}(1-q^{-i})}{(1-q^{-1})^2}\Bigg)\Bigg]+3\overline{S_{N,N-1,N-1}}\Bigg] \\
    &\overset{\cdotp 3N+1}{=}H_{2,-9_{35}}+q^{2N+1}\Bigg[3q^{-(N+1)}\left(\frac{\prod^\infty_{i=1}(1-q^{-i})}{1-q^{-1}}-\prod^\infty_{i=1}(1-q^{-i})\frac{q^{-(N+1)}}{(1-q^{-1})^2}\right) \\
    & \qquad-3q^{-(N+1)}\left(\frac{\prod^\infty_{i=1}(1-q^{-i})}{1-q^{-1}}\left(1+\frac{2q^{-(N+1)}}{1-q^{-1}}-\frac{q^{-(2N-1)}}{1-q^{-1}}\right)\right) \\
    &\qquad +3q^{-(2N+1)}\frac{\prod^\infty_{i=1}(1-q^{-i})}{(1-q^{-1})(1-q^{-2})} -3q^{-3N}\frac{\prod^\infty_{i=1}(1-q^{-i})}{(1-q^{-1})(1-q^{-2})(1-q^{-3})} \\
    & \qquad +3q^{-(2N+3)}\frac{\prod^\infty_{i=1}(1-q^{-i})}{(1-q^{-1})}\Bigg] \\
    &\overset{\cdotp N}{=}\prod^\infty_{i=1}(1-q^{-i}). \\
\end{align*}

This completes the proof of Theorem 1.1.

\section*{Acknowledgements}
The author would like to thank the Irish Research Council (Grant No. GOIPG/2018/2494) for their financial support and his Ph.D. advisor Robert Osburn for helpful comments and suggestions.

\end{document}